\documentclass[12pt]{article}
\usepackage{latexsym,amsfonts,amssymb,mathrsfs,float}
\usepackage{dsfont}
\usepackage{amsthm}
\usepackage[numbers]{natbib}
\usepackage{amsmath}
\usepackage{indentfirst}
\usepackage{bbm}

\usepackage{color}
\allowdisplaybreaks

\def \cal{\mathcal}

\newtheorem{thm}{Theorem}[section]

\newtheorem{lem}[thm]{Lemma}
\newtheorem{pro}[thm]{Proposition}

\newtheorem{rem}[thm]{Remark}

\numberwithin{equation}{section}

\begin{document}

\title{\bf The range of once-reinforced random walk on the half-line}

\author{Zechun Hu$^{1}$, Ting Ma$^{2,}$\footnote{Corresponding author}\ ,  Renming  Song$^{3}$   and Li Wang$^{1}$\\ \\
  {\small $^1$ College of Mathematics, Sichuan  University,
 Chengdu 610065, China}\\
 {\small zchu@scu.edu.cn; wangli0@stu.scu.edu.cn}\\ \\
  {\small $^2$School of Mathematical Sciences,
Chengdu University of Technology,  Chengdu, 610059, China}\\
{\small matingting2008@yeah.net} \\ \\
  {\small $^3$ Department of Mathematics,
University of Illinois Urbana-Champaign, Urbana, IL 61801, USA}\\
 {\small rsong@illinois.edu}}
\maketitle

\begin{abstract}

In this paper, we consider a once-reinforced random walk on the half-line, and give
the limiting behaviors of all the moments of its range.
 \end{abstract}

\smallskip

\noindent {\bf Keywords:} once-reinforced random walk; range process; half-line.

\smallskip

\noindent {\bf 2020 MR Subject Classification (2020)}\quad 60K35

\section{Introduction and main result}

The once-reinforced random walk (ORRW) was introduced in Davis \cite{DB} and it follows a very simple rule: Initially every edge carries weight $1$, and the walker traverses an edge with probability proportional to its current weight. The first time an edge is traversed, its weight is permanently reinforced
by a positive parameter,
and remains unchanged thereafter. Inspired  by the work of Pfaffelhuber and Stiefel \cite{PS}, we study the range of ORRW  on the half-line $\mathbb{Z}_{+}$ and establish asymptotic behaviors of  all the moments of the range.

Let $c>0$. An ORRW $X=(X_{n})_{n \in \mathbb{Z}_{+}}$ on $\mathbb{Z}_{+}$ with parameter $c$ is a process on $\mathbb{Z}_{+}$ such that for any $n \in \mathbb{Z}_+$,
\begin{align}
\mathbf{P}(X_{n+1} = X_{n} + 1 \mid \mathcal{F}_{n}) &= 1 - \mathbf{P}(X_{n+1} = X_{n} - 1 \mid \mathcal{F}_{n}) \nonumber \\
&=\left\{
\begin{array}{cl}
1 & \text{if } X_n = 0, \\
\frac{1}{2} & \text{if } 0 < X_{n} < R_{n}, \\
\frac{1}{1+c}& \text{if } X_{n} = R_{n},
\end{array}
\right.
\end{align}
where
\begin{equation}\label{2.2}
R_{n}:= \max\{X_{0},X_{1},\ldots, X_{n}\}.
\end{equation}
Note that $R_n+1$ is the cardinality of the range ${\cal R}_n:=\{x\in \mathbb{Z}_+: \exists m\in \{0, \dots, n\} \mbox{ such that } X_m=x\}$ of the ORRW by time $n$.

The  ORRW is closely related to
the linearly edge-reinforced random walk (LERRW). The LERRW
was introduced by Coppersmith and Diaconis \cite{CD} to model the behavior of a random walker who has a ``preference" for traversing edges he has been visited before. In this  process,  each edge initially has weight 1, and each time an edge is traversed, its weight is increased by a fixed positive constant.
The LERRW
satisfies certain partial exchangeability.
A number of results concerning the recurrence and transience of
LERRW
have been obtained (see, e.g., \cite{AO, DM, SC, SC2}). The model has  found applications in areas such as  biology (\cite{SA}) and social networks (\cite{PR}).

In contrast, the study of ORRW presents greater  challenges  due to the loss of partial
exchangeability.
Existing research on ORRW  has mainly focused on three families of graphs: tree structures (e.g., regular trees, Galton-Watson trees, and $\mathbb{Z}^{d}$-like trees),  ladder graphs (e.g., $\mathbb{Z}\times \{0,1\}$, $\mathbb{Z}\times \{0,1, \ldots, d\}$ with $d \geq 2$, $ \mathbb{Z}\times \Gamma$ ($\Gamma$ is  finite)), and integer lattices (e.g., $\mathbb{Z}^{d}$ with $d \geq 1$).

For tree structures, a rich body of work has devoted to phase transitions between  recurrence and transience. Durrett et al. \cite{DR} showed that  the ORRW is transient on regular trees for any reinforcement parameter $c>1$.  This result was extended by  Collevecchio \cite{CA1}   to supercritical Galton-Watson trees. Kious and Sidoravicius \cite{KD} further identified a phase transition on $\mathbb{Z}^{d}$-like trees. Subsequently, Collevecchio et al. \cite{CA2} fully resolved the question of transience/recurrence for ORRW on trees and showed that the critical parameter for  recurrence/transience of ORRW coincides with the so-called the branching-ruin number of the tree.
Collevecchio et al. \cite{CA3} also studied the speed of once-reinforced biased random walk (ORbRW) on trees, proving that on Galton-Watson trees without leaves, the speed is positive in the transient regime.

For ladder graphs, Sellke \cite{ST}  first investigated the ladder $\mathbb{Z} \times \{1, \ldots, d\}$  with $d \geq 2$  and established  almost-sure recurrence for small reinforcement. Later, Vervoort \cite{VM} showed recurrence for all sufficiently large reinforcement parameters. Kious et al. \cite{KD1} revisited Vervoort's unpublished work \cite{VM}   and proved   recurrence on any graph of the form $\mathbb{Z} \times \Gamma$ (with  $\Gamma$ finite) for sufficiently large reinforcement parameters. Huang et al. \cite{HX} observed that both Sellke \cite{ST} and Vervoort \cite{VM} proved that,  for the simplest ladder $\mathbb{Z} \times \{0, 1\}$,   the walk is almost surely recurrent for any reinforcement parameter
$c>1/2$. Huang et al. \cite{HX} proved that the ORRW on  $\mathbb{Z} \times \{0, 1\}$ is almost surely recurrent for
 any $c\in (0,1)$.

Sidoravicius conjectured that the ORRW  on $\mathbb{Z}^d$ is recurrent for $d\in \{1,2\}$ and undergoes a phase transition for $d\geq 3$, being recurrent when the reinforcement parameter $c$ is large and transient when it is small.   Vervoort \cite{VM} confirmed this conjecture for $d=1$. 
Recently, Elboim and Kozma \cite{ED} proved that for $d \geq 6$ and sufficiently small reinforcement parameters, the ORRW on $\mathbb{Z}^{d}$ is transient among other things.

In addition to these  recurrence/transience results, several results on the limit behavior of the ORRW have been obtained. Using Tauberian theory, Pfaffelhuber and Stiefel \cite{PS} obtained asymptotic
behaviors
for all the moments of the range of the ORRW on $\mathbb{Z}$. Specifically, the first moment of the range grows asymptotically like $\sqrt n$. This confirms the diffusive character of the range on the full line.  More recently, Collevecchio and Tarr\`{e}s \cite{CA4} extended
the analysis of  \cite{PS}
to general graphs via a change of measure formula. They derived large deviation estimates for the range and proved that on $\mathbb Z^{d}$ ($d\ge 2$)
the cardinality of the range at time $n$ is typically of order $n^{d/(d+2)}$.
However, the technique of Collevecchio and Tarr\`{e}s \cite{CA4}
 does not apply to
the setting with  boundary, such as the half-line.
Recent study on ORRWs on the  the half-line
has  centerd on limit theorems. Collevecchio et al. \cite{CA5} established a law of the iterated logarithm (LIL) for the ORRW on $\mathbb{Z}_{+}$, showing
$
\limsup_{n\to\infty}\frac{c X_{n}}{\sqrt{2n\log\log n}}=1\ \text{a.s.},
$
where $c>0$
 is a reinforcement parameter. For further results on reinforced random walks on the half-line, we refer the reader to \cite{AJ, HSW, TM}.

In this paper, motivated  by \cite{PS}, we study the range of the ORRW on the half-line, and derive asymptotic behaviors for all the moments of its range.
In this article,
the notation  $a_{n}\sim b_{n}$ means that  $a_{n}/b_{n}\stackrel{n \to \infty}{\longrightarrow }1$. We use  $\log$  to denote the logarithm with base $e$.
The main result is as follows.

\begin{thm}\label{thm-1.1}
 Let $R_{n}$, defined in \eqref{2.2}, be  the range of the ORRW on $\mathbb{Z}_{+}$ with parameter $c>0$. Then,
\begin{equation*}
\mathbb{E}\left[\left(\frac{R_{n}}{\sqrt{n}}\right)^{\ell}\right] \sim \frac{1}{2^{(\ell-2)/2}
\Gamma(\ell/2)} \cdot J_{\ell}(c),\quad \ell=1,2, \ldots,
\end{equation*}
where
\begin{equation*}
J_{\ell}(c):=2^{c} \int_{0}^{\infty} x^{\ell-1}\left(\frac{e^{x}}{e^{2x}+1}\right)^{c}dx.
\end{equation*}
In particular,
\begin{equation*}
\mathbb{E}\left[\frac{R_{n}}{\sqrt{n}}\right] \sim \sqrt{\frac{2}{\pi}} J_{1}(c) \quad \text { and } \quad \mathbb{E}\left[\frac{R_{n}^{2}}{n}\right] \sim J_{2}(c).
\end{equation*}
\end{thm}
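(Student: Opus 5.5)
The plan is to reduce the moments of $R_n$ to an explicit generating function in $s$, find its singular behaviour as $s\uparrow 1$, and then apply a Tauberian theorem, in the spirit of \cite{PS}.

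\textbf{Step 1: renewal decomposition of the range.} Let $T_k$ be the time the walk needs to go from its first visit to $k$ to its first visit to $k+1$. By the strong Markov property at the successive record times, $T_0,T_1,\dots$ are independent, and
\[
\{R_n\ge m\}=\{T_0+\dots+T_{m-1}\le n\}.
\]
Write $f_k(s)=\mathbf E[s^{T_k}]$. Starting from the maximum $k$, the walk either steps up with probability $1/(1+c)$, or steps down to $k-1$. In the second case it then performs a simple random walk on $\{0,\dots,k\}$, reflected at $0$, until it returns to $k$; call the generating function of this return time $g_k(s)$. Hence
\[
f_k(s)=\frac{s/(1+c)}{1-\frac{c}{1+c}\,s\,g_k(s)}.
\]

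\textbf{Step 2: explicit formula for $f_k$.} Parametrize $s=1/\cosh\theta$. The function $h(x)=\cosh(\theta x)$ satisfies $h(x)=s\,\mathbf E[h(\text{next position})]$ for the walk reflected at $0$, including at $x=0$, since $s\,h(1)=1=h(0)$. Optional stopping then gives $g_k=\cosh(\theta(k-1))/\cosh(\theta k)$. Substituting and using $\cosh(\theta(k-1))=\cosh\theta k\cosh\theta-\sinh\theta k\sinh\theta$ yields
\[
f_k(s)=\frac{1}{\cosh\theta+c\,\sinh\theta\,\tanh(\theta k)}.
\]

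\textbf{Step 3: generating function of the moments.} Summing $\mathbf P(R_n\ge m)$ against $s^n$ and using $\mathbf E[R_n^\ell]=\sum_{m\ge1}(m^\ell-(m-1)^\ell)\mathbf P(R_n\ge m)$, we get
\[
\sum_{n\ge0}s^n\,\mathbf E[R_n^\ell]=\frac{1}{1-s}\sum_{m\ge1}\bigl(m^\ell-(m-1)^\ell\bigr)\prod_{k=0}^{m-1}f_k(s).
\]

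\textbf{Step 4: asymptotics as $s\uparrow1$.} Here $\theta\sim\sqrt{2(1-s)}\to0$. Using $\log f_k=-\theta^2/2-c\theta\tanh(\theta k)+O(\theta^3+\theta^2\tanh^2(\theta k))$, the product over $k<m$ behaves like
\[
\exp\Bigl(-c\sum_{k<m}\theta\tanh(\theta k)\Bigr)\approx(\cosh\theta m)^{-c},
\]
uniformly for $\theta m$ in compacts. The sum over $m$ is then a Riemann sum:
\[
\sum_m \ell m^{\ell-1}(\cosh\theta m)^{-c}\sim \ell\,\theta^{-\ell}\int_0^\infty x^{\ell-1}(\cosh x)^{-c}\,dx=\ell\,\theta^{-\ell}J_\ell(c),
\]
since $(\cosh x)^{-c}=2^c\bigl(e^x/(e^{2x}+1)\bigr)^c$. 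Consequently
\[
\sum_n s^n\,\mathbf E[R_n^\ell]\sim \ell\,2^{-\ell/2}J_\ell(c)\,(1-s)^{-1-\ell/2}.
\]

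\textbf{Step 5: Tauberian inversion.} The sequence $\mathbf E[R_n^\ell]$ is nondecreasing in $n$, because $R_n$ is nondecreasing. Karamata's Tauberian theorem with the monotone density argument therefore gives
\[
\mathbf E[R_n^\ell]\sim\frac{\ell\,2^{-\ell/2}J_\ell(c)}{\Gamma(1+\ell/2)}\,n^{\ell/2}=\frac{2^{-(\ell-2)/2}J_\ell(c)}{\Gamma(\ell/2)}\,n^{\ell/2},
\]
which is the claim.

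\textbf{Main obstacle.} The delicate point is Step 4: justifying the replacement of the product by $(\cosh\theta m)^{-c}$ and passing the limit through the $m$-sum. I would handle this with dominated convergence. The key bound is $\prod_{k<m}f_k\le \exp(-c'\theta m)$ for $\theta m\ge1$, which follows because $\tanh(\theta k)\ge\tanh 1/2$ for $k\ge m/2$; this gives an integrable majorant in the variable $x=\theta m$. Separately, the error terms $\theta^2 m$ and $\theta^3\sum_{k<m}1$ must be checked to vanish uniformly on $\theta m\le M$ for each fixed $M$.
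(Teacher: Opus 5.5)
Your proposal is correct and follows essentially the same route as the paper: independent record-to-record times, the explicit generating function $f_k(s)=1/(\cosh\theta+c\sinh\theta\tanh(\theta k))$ (the paper's $G_k(s)$ with $\theta=d_s$), a Riemann-sum evaluation giving $\int_0^\infty x^{\ell-1}(\cosh x)^{-c}\,dx=J_\ell(c)$, and the Hardy--Littlewood Tauberian theorem with monotonicity. The differences are minor or in your favour. You get $g_k$ by optional stopping for $\cosh(\theta x)$ rather than by solving the ruin-type difference equation. You work with $\mathbb{E}[R_n^\ell]$ directly via the weights $m^\ell-(m-1)^\ell$ rather than with rising factorial moments, which avoids the final reduction that the paper leaves implicit. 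Your domination $\prod_{k<m}f_k\le e^{-c'\theta m}$ for $\theta m\ge 1$ justifies the Riemann-sum limit, which the paper only asserts.
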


\begin{rem}
Compared with the asymptotic behaviors of the moments of the range of the ORRW on $\mathbb{Z}$ obtained in \cite{PS}, one sees that the asymptotic orders of the moments of the range of the ORRW on
$\mathbb{Z}_+$ are the same as that on $\mathbb{Z}$,
but the  coefficients are different. In \cite{PS},
\begin{equation*}
J_{\ell}(c)=2^{2c} \int_{0}^{\infty} x^{\ell-1}\left(\frac{e^{x}}{(e^x+1)^2}\right)^{c} d x.
\end{equation*}
\end{rem}

\medskip

\begin{rem}\label{rem-1.2} The ORRW with $c=1$ is just the
  simple symmetric
 random walk on $\mathbb{Z}_{+}$  with the reflection at the origin 0. In this case, we have
\begin{equation*}
J_{1}(1) =2 \int_{0}^{\infty} \frac{e^{x}}{e^{2x}+1}dx \stackrel{y=e^{x}}{=}2 \int_{1}^{\infty}\frac{1}{y^{2}+1}dy=2 \arctan y  |_{1}^{\infty}=\frac{\pi}{2}
\end{equation*}
and
\begin{equation*}
\begin{aligned}
J_{2}(1)&=2 \int_{0}^{\infty}x\frac{e^{x}}{e^{2x}+1}dx=2 \int_{0}^{\infty} x\frac{e^{-x}}{1+e^{-2x}}dx\\
&=2 \int_{0}^{\infty}x e^{-x} \sum_{n=0}^{\infty}(-1)^{n}e^{-2nx}dx=2 \sum_{n=0}^{\infty} (-1)^{n} \int_{0}^{\infty}x e^{-(2n+1)x}dx\\
&=2\sum_{n=0}^{\infty}\frac{(-1)^{n}}{(2n+1)^{2}}=2G,
\end{aligned}
\end{equation*}
where $G$ is the Catalan constant. Hence
\begin{equation*}
\mathbb{E}\left[\frac{R_{n}}{\sqrt{n}}\right] \sim \sqrt{\frac{\pi}{2}}  \quad \text { and } \quad \mathbb{E}\left[\frac{R_{n}^{2}}{n}\right] \sim 2G.
\end{equation*}
\end{rem}

\section{Proof of Theorem \ref{thm-1.1}}\label{Proof Theorem}\setcounter{equation}{0}

Our treatment of the range process $R_n$ is an adaptation of the technique of  Pfaffelhuber and Stiefel \cite{PS}.

 Let $S_{k} := \inf \{ n: R_{n} = k \}$ denote the first time the range of the ORRW $X=(X_n)_{n\in\mathbb Z_{+}}$ on $\mathbb{Z}_{+}$ reaches $k$.  For $i=1,2,...$,
 define  $T_{i} := S_{i+1} - S_{i}$ as the time between the first instant the range  $R_{n}$ equals $i$ and the first instant it equals $i + 1$.  Note that $T_{i} = 1$ with probability $ 1/(1 + c)$.
Otherwise, the walk $X$ returns to $i-1$, and subsequently moves within $\{0,...,i\}$ until it again attains the range $i$. Denote this duration by $\tau_{i}$.
 $\tau_{i}$ is the  hitting time of  $i$ for a simple random walk with a reflecting barrier  at $0$ (i.e., the walk  moves to $1$ at the next step upon hitting 0) starting from $i-1$.
Upon reaching $i$,  the walk moves to $i+1$  with  probability $1/{(1+c)}$, thereby increasing the range.
  If instead it returns to $i-1$, and the procedure for again reaching $i$ from $i-1$  (as described above) repeats independently. The trial of advancing from $i$ to $i+1$ is thus repeated until it succeeds.
The number $Y_{i}$  of independent attempts needed to first achieve a transition from $i$ to $i+1$ follows a geometric  distribution
with  a success probability $1/(1+c)$. That is,  $\mathbb P (Y_i=j)=(\frac c{c+1})^{j}\frac 1{c+1}$ for  $j=0,1,2,...$.  In total, this gives
\begin{equation}\label{2.3}
S_{k}=1+\sum_{i=1}^{k-1}T_{i}, \quad k=1,2,\ldots \quad  \text{with}  \quad T_{i} = 1 + \sum_{j=1}^{Y_{i}} (1 + \tau_{i}^j), \quad i = 1, 2, \ldots,
\end{equation}
where we define the empty sum to be $0$. Here, $\tau_{i}^k$, $k = 1, 2, \ldots$ are  independent
copies of $\tau_i$ and are also independent of the sequence $ Y_i, i =1,2,\ldots$.

\subsection{Some preliminaries}

Before proving Theorem \ref{thm-1.1}, we first make some preparations.
We begin by recalling the classical Tauberian result by Hardy and Littlewood
\cite[Chapter I. Theorem 7.4]{T}
(see also  \cite[Proposition 12.5.2]{LGF}).

\begin{thm}\label{thm-3.1}
Let $a_{1}, a_{2}, \ldots \geq 0$ such that $\sum_{n=1}^{\infty} a_{n} x^{n}$ converges for $|x|<1$. Suppose that for some $\alpha, A \geq 0$,
\begin{equation*}
\sum_{n=1}^{\infty} a_{n} x^{n} \stackrel{x \uparrow 1}{\sim} \frac{A}{(1-x)^{\alpha}}.
\end{equation*}
Then,
\begin{equation*}
\sum_{k=1}^{n} a_{k} \stackrel{n \rightarrow \infty}{\sim} \frac{A}{\Gamma(\alpha+1)} n^{\alpha}.
\end{equation*}
Moreover, if $\alpha>1$, and $n \mapsto a_{n}$ is nondecreasing,
\begin{equation}\label{3.2-a}
a_{n} \sim \frac{A \alpha}{\Gamma(\alpha+1)} n^{\alpha-1}.
\end{equation}
\end{thm}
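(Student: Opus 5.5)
The plan is Karamata's proof of the Hardy--Littlewood theorem, followed by a monotone-density argument for \eqref{3.2-a}. The case $A=0$ is handled by the same argument with all limits equal to $0$, so assume $A>0$. Set $f(x)=\sum_{n\ge1}a_nx^n$; the case $\alpha=0$ is degenerate but fits the same scheme, so take $\alpha>0$ below.

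\emph{Step 1: monomials.} For each integer $k\ge0$, replace $x$ by $x^{k+1}$ in the hypothesis. Since $1-x^{k+1}\sim(k+1)(1-x)$ as $x\uparrow1$,
\begin{equation*}
(1-x)^{\alpha}\sum_{n\ge1}a_nx^n\,(x^n)^k\;\longrightarrow\;\frac{A}{(k+1)^{\alpha}}=\frac{A}{\Gamma(\alpha)}\int_0^\infty t^{\alpha-1}e^{-t}\,e^{-kt}\,dt .
\end{equation*}
By linearity, for every polynomial $p$,
\begin{equation*}
(1-x)^\alpha\sum_n a_nx^np(x^n)\to \frac{A}{\Gamma(\alpha)}\int_0^\infty t^{\alpha-1}e^{-t}p(e^{-t})\,dt .
\end{equation*}

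\emph{Step 2: the indicator.} Define $g(y)=y^{-1}$ for $y\in[e^{-1},1]$ and $g(y)=0$ for $y\in[0,e^{-1})$. Then $x^ng(x^n)=\mathbf 1_{\{x^n\ge e^{-1}\}}$. Because $g$ has a single jump, for each $\varepsilon>0$ there are polynomials $p\le g\le q$ on $[0,1]$ with
\begin{equation*}
\int_0^\infty t^{\alpha-1}e^{-t}\,(q-p)(e^{-t})\,dt<\varepsilon .
\end{equation*}
To build them, first take continuous functions squeezing $g$ that differ from it only on a small interval around $e^{-1}$. Then use Weierstrass approximation for $h(y)=(q_0(y)-g_0(y))/(y(1-y))$-type corrections, i.e.\ approximate continuous functions uniformly while keeping the sandwich, adjusting the weight near $y=0,1$ so that the error integrals are small.

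Since $a_n\ge0$, the sums are monotone in the test function. Step 1 then gives
\begin{equation*}
(1-x)^\alpha\sum_n a_n\mathbf 1_{\{x^n\ge e^{-1}\}}\to\frac{A}{\Gamma(\alpha)}\int_0^1t^{\alpha-1}\,dt=\frac{A}{\Gamma(\alpha+1)} .
\end{equation*}
Choosing $x=e^{-1/N}$, the indicator selects exactly $n\le N$ and $(1-x)^{\alpha}\sim N^{-\alpha}$. This yields $s_N:=\sum_{k\le N}a_k\sim AN^\alpha/\Gamma(\alpha+1)$.

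I expect the main obstacle to be the approximation step. The difficulty is producing polynomials that sandwich the discontinuous $g$ with small weighted $L^1$ error, uniformly enough to control the behavior near $y=0$, where the weight $t^{\alpha-1}e^{-t}$ lives for large $t$. I would first approximate $g$ by continuous $g_\pm$, then write $g_\pm(y)=g_\pm(0)+y\,h_\pm(y)$ and approximate $h_\pm$ uniformly, which keeps errors bounded by $\varepsilon\int t^{\alpha-1}e^{-2t}\,dt$.

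\emph{Step 3: the monotone case.} Assume $a_n$ is nondecreasing and fix $\varepsilon\in(0,1)$. Monotonicity gives
\begin{equation*}
\frac{s_n-s_{\lfloor(1-\varepsilon)n\rfloor}}{\varepsilon n+1}\le a_n\le\frac{s_{\lceil(1+\varepsilon)n\rceil}-s_n}{\varepsilon n}.
\end{equation*}
Dividing by $n^{\alpha-1}$ and applying Step 2, the upper and lower limits of $a_n/n^{\alpha-1}$ lie between
\begin{equation*}
\frac{A}{\Gamma(\alpha+1)}\cdot\frac{1-(1-\varepsilon)^\alpha}{\varepsilon}
\quad\text{and}\quad
\frac{A}{\Gamma(\alpha+1)}\cdot\frac{(1+\varepsilon)^\alpha-1}{\varepsilon}.
\end{equation*}
Letting $\varepsilon\downarrow0$, both bounds tend to $A\alpha/\Gamma(\alpha+1)$, which proves \eqref{3.2-a}.
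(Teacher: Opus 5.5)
Your proof is correct. Note that the paper does not prove this statement at all: it quotes it as the classical Hardy--Littlewood Tauberian theorem from the literature, and then applies it with $\alpha=(\ell+3)/2$ to the nondecreasing sequence $a_n=\mathbb{E}[R_n\cdots(R_n+\ell)]$. You instead reconstruct the standard self-contained route. For the partial sums you use Karamata's polynomial-approximation argument. For the pointwise asymptotics you use the usual sandwich $s_n-s_{\lfloor(1-\varepsilon)n\rfloor}\le(\varepsilon n+1)a_n$ and $s_{\lceil(1+\varepsilon)n\rceil}-s_n\ge\varepsilon n\,a_n$, which incidentally does not need $\alpha>1$. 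Citing buys brevity; your version gives a complete, elementary argument that needs nothing beyond Weierstrass approximation.

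Two small corrections to the write-up.

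First, the approximation step you single out as the main obstacle is easier than you fear. The substitution $y=e^{-t}$ turns $e^{-t}\,dt$ into $dy$. So the limit functional is $P\mapsto\frac{A}{\Gamma(\alpha)}\int_0^1(\log(1/y))^{\alpha-1}P(y)\,dy$, which is integration against a finite measure on $[0,1]$ of total mass $\Gamma(\alpha)$, with no difficulty at $y=0$. Once you have continuous $g_-\le g\le g_+$ differing only near $e^{-1}$, take any polynomials $p,q$ with $\|p-g_-\|_\infty<\varepsilon$ and $\|q-g_+\|_\infty<\varepsilon$. These give the sandwich $p-\varepsilon\le g\le q+\varepsilon$ at an additional cost of at most $4\varepsilon\Gamma(\alpha)$ in $\int_0^1(\log(1/y))^{\alpha-1}(q-p)(y)\,dy$. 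The device $g_\pm(y)=g_\pm(0)+y\,h_\pm(y)$ is harmless but unnecessary, and the sentence about ``$(q_0-g_0)/(y(1-y))$-type corrections'' should simply be deleted.

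Second, $\alpha=0$ does not literally fit your scheme. There $\Gamma(0)$ is undefined and the limit functional degenerates to $P\mapsto A\,P(1)$. It is cleaner to observe that monotone convergence gives $\sum_n a_n=\lim_{x\uparrow1}\sum_n a_nx^n=A$ directly.
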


The following lemma gives the generating function of the first hitting time in a simple symmetric random walk with a reflecting barrier. Its proof adapts the classical “ruin problem” method (see e.g., \cite[Chapter XIV.4]{FW}).

\begin{lem}\label{lem-3.2}
Let $a, b, x \in \mathbb{Z}$ with $a \geq 0$ and $-a \leq x\le b$.
Consider a simple symmetric random walk $Z=(Z_{n})_{n \in \mathbb{Z}_{+}}$ starting at $x$
on the state space $\{-a,-a+1,...\}$ with a reflecting barrier at $-a$, defined by the rule: if  $Z_n=-a$, then  $Z_{n+1}=-a+1$.
Let $T_{b}:=\inf \left\{n\ge0: Z_{n}=b \right\} $ denote the first hitting time of $b$. Then for any $s \in (0,1)$
\begin{equation}\label{3.0}
\mathbb{E}_{x}\left[s^{T_{b}}\right]=\frac{r_s^{x}+r_s^{-(2a+x)}}{r_s^{b}+r_s^{-(2a+b)}},
\end{equation}
where
\begin{equation}\label{3.1}
r_s:=\frac{1+\sqrt{1-s^{2}}}{s}.
\end{equation}
\end{lem}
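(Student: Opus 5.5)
We prove Lemma \ref{lem-3.2} with the classical first-step (ruin-problem) method: fix $s\in(0,1)$ and $b$, and set $f(x):=\mathbb{E}_x[s^{T_b}]$ for $-a\le x\le b$. The plan has three steps: derive a difference equation with boundary conditions, write down its general solution, and fix the constants.

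\textbf{Step 1 (difference equation).} Since $T_b\ge 1$ whenever $x<b$, conditioning on the first step and using the Markov property gives
\begin{equation*}
f(x)=\tfrac{s}{2}\bigl(f(x+1)+f(x-1)\bigr),\qquad -a<x<b.
\end{equation*}
The boundary conditions are
\begin{equation*}
f(b)=1,\qquad f(-a)=s\,f(-a+1),
\end{equation*}
the second because the walk at $-a$ moves to $-a+1$ deterministically. If $a=0$ and $x=b=0$ there is nothing to prove, so we may assume $b>-a$.

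\textbf{Step 2 (general solution).} The characteristic equation $\tfrac{s}{2}(r+r^{-1})=1$ has the roots $r_s$ and $r_s^{-1}$, with $r_s$ as in \eqref{3.1}; they are real and distinct because $0<s<1$, so $r_s>1$. Hence
\begin{equation*}
f(x)=A r_s^{x}+B r_s^{-x}.
\end{equation*}
We claim the reflecting condition forces $B=A r_s^{-2a}$, that is, $f(x)\propto r_s^{x}+r_s^{-(2a+x)}$, which is symmetric under the reflection $x\mapsto -2a-x$ about $-a$. To verify this, put $g(x)=r_s^{x}+r_s^{-2a-x}$. This $g$ solves the recursion for every $x$, in particular at $x=-a$, so $g(-a)=\tfrac{s}{2}\bigl(g(-a+1)+g(-a-1)\bigr)$. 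By symmetry $g(-a-1)=g(-a+1)$, and therefore $g(-a)=s\,g(-a+1)$, which is exactly the reflecting condition. The normalization $f(b)=1$ then fixes the constant, giving \eqref{3.0}.

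\textbf{Step 3 (uniqueness).} It remains to justify that $f$ is the unique solution of the boundary problem, so that it coincides with the explicit $g(x)/g(b)$. We plan to avoid a separate optional stopping argument. The recursion is second order, and the boundary relation at $-a$ determines $f(-a+1)$ from $f(-a)$, so the solution space with the reflecting condition is one-dimensional. Moreover $f$ is finite, since $0\le f\le1$ because $s<1$. The normalization $f(b)=1$ then pins the solution down, provided $g(b)\ne0$, which holds since $g>0$.

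The only delicate points are this uniqueness, together with the fact that $T_b<\infty$ a.s.\ on a finite reflecting segment; the latter is not even needed for $s<1$, since $s^{\infty}=0$. Both are routine.
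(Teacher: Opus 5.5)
Your proposal is correct and follows essentially the same route as the paper. Both arguments use first-step analysis to get the recursion with boundary conditions $f(-a)=s\,f(-a+1)$ and $f(b)=1$, write the general solution as $Ar_s^{x}+Br_s^{-x}$, and fix the constants so that $A=B\,r_s^{2a}$. The differences are cosmetic. You check the reflecting condition through the symmetry $g(-a-1)=g(-a+1)$, whereas the paper solves the $2\times 2$ system using the identity $1-sr_s=sr_s^{-1}-1$; both reduce to $s(r_s+r_s^{-1})=2$. You also make the one-dimensionality/uniqueness argument and the degenerate case $b=-a$ explicit, which the paper leaves implicit.
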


\begin{proof}
Let
\[
U_{x}(s):=\mathbb{E}_{x} [s^{T_{b}} ]=\sum_{n=0}^\infty \mathbb P_x(T_b=n)s^n.
\]
Then for  $-a < x < b$,
\begin{equation}\label{3.4}
U_{x}(s)=\frac{s}{2}U_{x+1}(s)+\frac{s}{2}U_{x-1}(s)
\end{equation}
and
\begin{equation}\label{3.5}
U_{-a}(s)=sU_{-a+1}(s), \quad U_{b}(s)=1.
\end{equation}
We now use the method of \cite[Chapter XIV.4]{FW} to solve the  difference equation \eqref{3.4}.
We first look for particular solutions of the form $U_{x}(s)=r ^{x}(s)$.
Substituting this expression into \eqref{3.4}, we get
that $r (s)$ must satisfy the quadratic equation
\begin{equation}\label{quadratic}
r(s)=\frac{s}{2}r^{2}(s)+\frac{s}{2},
\end{equation}
which has  two roots
\begin{equation*}
r_{1}(s)=\frac{1+\sqrt{1-s^{2}}}{s}, \quad r_{2}(s)=\frac{1-\sqrt{1-s^{2}}}{s}, \quad s \in (0,1),
\end{equation*}
satisfying $r_{1}(s) \cdot r_{2}(s)=1$. It is easy to check that
 \begin{equation}\label{quadratic-2}
1-sr_1(s)={sr_1^{-1}(s)-1}.
\end{equation}
The same identity also holds for $r_2(s)$.

According to \cite[Chapter XIV.2]{FW},
the general solution can be written  as
\begin{equation}\label{3.6}
U_{x}(s)=A(s)r_{1}^{x}(s)+B(s)r_{2}^{x}(s),
\end{equation}
where $A(s)$ and $B(s)$ are
functions of $s$. The boundary conditions \eqref{3.5} yield
\begin{equation}\label{3.7-1}
 A(s)r_{1}^{-a}(s) (1-sr_1(s))= B(s)r_{2}^{-a}(s)(sr_2(s)-1),
 \end{equation}
and
 \begin{equation}\label{3.8-1}
 A(s)r_{1}^{b}(s) + B(s)r_{2}^{b}(s)=1.
 \end{equation}
Using the  relation  $ r_{2}(s)=1/r_1(s)$ and \eqref{quadratic-2}, equation \eqref{3.7-1} simplifies to
\begin{equation}\label{3.7}
 A(s)  = B(s)r_{1}^{2a}(s).
 \end{equation}
Substituting $ r_{2}(s)=1/r_1(s)$ into   \eqref{3.8-1}  gives
 \begin{equation}\label{3.8}
 A(s)r_{1}^{b}(s) + B(s)r_{1}^{-b}(s)=1.
 \end{equation}
Solving the linear system \eqref{3.7} and \eqref{3.8} for $A(s)$ and $B(s)$, we get
 \begin{equation}\label{3.9}
 A(s)=\frac{1}{r_{1}^{b}(s)+r_{1}^{-(2a+b)}(s)}, \quad B(s)=\frac{1}{r_{1}^{2a+b}(s)+r_{1}^{-b}(s)}.
 \end{equation}
 Finally,  inserting  \eqref{3.9} into \eqref{3.6} and replacing $r_2(s)$ by $1/r_1(s)$, we easily obtain \eqref{3.0} .
 \end{proof}

\subsection{Proof of Theorem \ref{thm-1.1}}

Following \cite[Lemma 3.2]{PS},  we begin with the generating functions of $\tau_{i}$, $T_{i}$, and $S_{k}$.
Recall that,  for $s\in(0,1)$, $r_s$ is defined in \eqref{3.1}.
 Let
\begin{equation}\label{3.2}
g_{x}(s):=\frac{r_s^{x-1}+r_s^{1-x}}{r_s^{x}+r_s^{-x}}, \quad G_{x}(s):=\frac{s}{1+c-cs g_{x}(s)}{\color{blue} .}
\end{equation}

\begin{lem}\label{lem-3.3}
Let $\tau_{i}$ and $T_{i}$  be defined in \eqref{2.3}, $g_x(\cdot)$ and $G_x(\cdot)$ be defined in \eqref{3.2}. Then
for $i=1,2,3,\ldots$, and $s\in (0,1)$,
\begin{align}\label{lem-3.3-a}
\mathbb{E}\left[s^{\tau_{i}}\right]=g_{i}(s), \quad \mathbb{E}\left[s^{T_{i}}\right]=G_{i}(s).
\end{align}
Moreover, the generating function of $S_{k}$ is given by
\begin{equation}
\mathbb{E}\left[s^{S_{k}}\right]=s \prod_{i=1}^{k-1} G_{i}(s), \quad k=1,2,
\ldots.
\end{equation}
\end{lem}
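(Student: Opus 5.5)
The plan is to obtain all three identities directly from the decomposition \eqref{2.3} and Lemma \ref{lem-3.2}.

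\emph{The identity for $\tau_i$.} By definition, $\tau_i$ is the hitting time of $i$ for a simple symmetric random walk started at $i-1$ and reflected at $0$. This is exactly the setting of Lemma \ref{lem-3.2} with $a=0$, $x=i-1$, $b=i$. Formula \eqref{3.0} then gives
\[
\mathbb{E}[s^{\tau_i}]=\frac{r_s^{i-1}+r_s^{-(i-1)}}{r_s^{i}+r_s^{-i}}=g_i(s),
\]
which matches \eqref{3.2}.

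\emph{The identity for $T_i$.} I will condition on $Y_i$ and use the independence of $Y_i$ and the copies $\tau_i^j$ stated after \eqref{2.3}. This gives
\[
\mathbb{E}[s^{T_i}\mid Y_i]=s\,(s g_i(s))^{Y_i}.
\]
Taking expectations against the geometric law $\mathbb P(Y_i=j)=(\frac{c}{1+c})^j\frac1{1+c}$ produces a geometric series:
\[
\mathbb{E}[s^{T_i}]=\frac{s}{1+c}\sum_{j\ge0}\Big(\frac{c\,s g_i(s)}{1+c}\Big)^j=\frac{s}{1+c-cs g_i(s)}=G_i(s).
\]
The series converges because $0<s g_i(s)<1$ for $s\in(0,1)$, since $g_i(s)$ is a probability generating function evaluated at $s<1$.

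\emph{The identity for $S_k$.} From $S_k=1+\sum_{i=1}^{k-1}T_i$, the product formula follows once the $T_i$ are shown to be mutually independent. This independence is the main point needing care. I would justify it with the strong Markov property at the stopping times $S_i$. At time $S_i$ the walk sits at $i$, edges $\{0,\dots,i\}$ are reinforced and the edge $\{i,i+1\}$ is not. So the evolution after $S_i$ up to $S_{i+1}$ depends only on $i$, with fresh independent randomness, and not on the past. Hence $T_1,\dots,T_{k-1}$ are independent, and
\[
\mathbb{E}[s^{S_k}]=s\prod_{i=1}^{k-1}G_i(s).
\]
For $k=1$ the product is empty and the formula reads $\mathbb{E}[s^{S_1}]=s$, consistent with $S_1=1$. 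This last step is the only nonroutine part, and the decomposition described before \eqref{2.3} already encodes it.
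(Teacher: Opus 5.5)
Your proposal is correct and follows essentially the same route as the paper. It uses Lemma~\ref{lem-3.2} with $a=0$, $x=i-1$, $b=i$ for $\tau_i$, conditions on the geometric variable $Y_i$ for $T_i$, and takes the product of the generating functions of the $T_i$ for $S_k$. Your additions are details the paper leaves implicit: the convergence check, and the independence of $T_1,\ldots,T_{k-1}$ via the strong Markov property at $S_i$. That property should be applied to the Markov pair $(X_n,R_n)$, since $X$ alone is not Markov.
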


\begin {proof}
Applying Lemma \ref{lem-3.2} with $a=0$,
 $x=i-1$
and $b=i$, we immediately get the first equality in
\eqref{lem-3.3-a}.

For the generating function of $T_{i}$, recall that  $Y_{i} \sim \operatorname{geo}(1 /(1+c))$ is
 the number of failures before the first success.  Its probability  generating function is
\begin{equation*}
\mathbb E\left[s^{Y_i}\right]= \frac{1}{c+1} \sum_{k=0}^{\infty}\left(\frac{c s}{c+1}\right)^{k}=\frac{1}{1+c-c s}.
\end{equation*}
Using the independence of $\{Y_i\}$ with the independent family $\{\tau^j_i\}$, we get
\begin{equation*}
\mathbb{E}\left[s^{T_{i}}\right]=s \mathbb{E}\left[\mathbb{E}\left[\prod_{j=1}^{Y_{i}} s^{1+\tau_{i}^{j}} \mid Y_{i}\right]\right]=s\mathbb{E}\left[\left(s g_{i}(s)\right)^{Y_{i}}\right]=\frac{s}{1+c-csg_{i}(s)} .
\end{equation*}

Finally, using  \eqref{2.3} we get
\begin{equation*}
\mathbb{E}\left [s^{S_{k}}\right ]=s\mathbb{E}\left[\prod_{i=1}^{k-1}s^{T_i}\right]=s\prod_{i=1}^{k-1}\mathbb{E}\left[s^{T_i}\right] =s\prod_{i=1}^{k-1}G_i(s).
\end{equation*}
This completes the proof.
\end{proof}

 Using the generating function of $S_k$, one can easily get the following result, see \cite[Lemma 3.3]{PS}. Recall $G_{x}(s)$  is defined in  \eqref{3.2}.
\begin{lem}\label{lem-3.4}
For $s \in(0,1)$ and $ \ell=0,1,2, \ldots$, it holds
that
\begin{equation}\label{3.15-a}
H_{\ell}(s):=\sum_{n=1}^{\infty} s^{n} \mathbb{E}\left[R_{n} \cdots\left(R_{n}+\ell\right)\right]=\frac{\ell+1}{1-s} \sum_{k=1}^{\infty} k \cdots(k+\ell-1) s \prod_{i=1}^{k-1} G_{i}(s).
\end{equation}
\end{lem}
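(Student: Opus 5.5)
The identity is a purely combinatorial consequence of the tail-sum representation of $R_n$ in terms of the hitting times $S_k$, combined with the generating function of $S_k$ from Lemma \ref{lem-3.3}. Since $R_n$ is nondecreasing and $S_k=\inf\{n:R_n=k\}$, we have $\{R_n\ge k\}=\{S_k\le n\}$ for $k\ge 1$. Here $S_0=0$, and $S_1=1$ since the walk starts at $0$ and moves to $1$.

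The first step is a discrete ``integration by parts''. For any integer $m\ge 0$ and $\ell\ge 0$,
\begin{equation*}
m(m+1)\cdots(m+\ell)=(\ell+1)\sum_{k=1}^{m} k(k+1)\cdots(k+\ell-1).
\end{equation*}
This follows by telescoping, since
\begin{equation*}
k\cdots(k+\ell)-(k-1)\cdots(k+\ell-1)=(\ell+1)\,k\cdots(k+\ell-1).
\end{equation*}
For $\ell=0$ the product $k\cdots(k+\ell-1)$ is empty and equals $1$, and the formula reduces to $m=\sum_{k=1}^m 1$. Applying this with $m=R_n$ gives
\begin{equation*}
\mathbb{E}[R_n\cdots(R_n+\ell)]=(\ell+1)\sum_{k=1}^{\infty}k\cdots(k+\ell-1)\,\mathbb{P}(S_k\le n).
\end{equation*}
All terms are nonnegative and $R_n\le n$, so only finitely many $k$ contribute for each $n$.

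Next, multiply by $s^n$ and sum over $n\ge 1$. Tonelli's theorem allows the sums over $n$ and $k$ to be interchanged, since everything is nonnegative. For each $k\ge 1$, using $S_k\ge 1$,
\begin{equation*}
\sum_{n=1}^\infty s^n\,\mathbb{P}(S_k\le n)=\mathbb{E}\Big[\sum_{n\ge S_k}s^n\Big]=\frac{\mathbb{E}[s^{S_k}]}{1-s}.
\end{equation*}
Substituting $\mathbb{E}[s^{S_k}]=s\prod_{i=1}^{k-1}G_i(s)$ from Lemma \ref{lem-3.3} gives exactly \eqref{3.15-a}.

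There is no real obstacle. The only points needing care are the following.
\begin{itemize}
\item The event identity $\{R_n\ge k\}=\{S_k\le n\}$ uses that $R_n$ increases by at most one per step, so the level $k$ is actually attained.
\item The boundary case $k=1$ must be handled consistently with the convention $S_1=1$ in \eqref{2.3}.
\item The $\ell=0$ case needs the empty-product convention.
\end{itemize}
Convergence for $s\in(0,1)$ is automatic, because $\mathbb{E}[R_n\cdots(R_n+\ell)]\le (n+\ell)^{\ell+1}$, so the series on the left converges. By Tonelli, the right-hand side is the same sum and therefore converges as well.
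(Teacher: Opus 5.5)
Your argument is correct and is essentially the route the paper intends. The paper gives no details beyond citing Pfaffelhuber--Stiefel (Lemma 3.3 there) together with the generating function $\mathbb{E}[s^{S_k}]=s\prod_{i=1}^{k-1}G_i(s)$ from Lemma~\ref{lem-3.3}; your combination of $\{R_n\ge k\}=\{S_k\le n\}$, the telescoping identity for the rising factorial, and $\sum_{n\ge 1}s^n\mathbb{P}(S_k\le n)=\mathbb{E}[s^{S_k}]/(1-s)$, with the checks on $S_1=1$, the empty product for $\ell=0$, and Tonelli, supplies exactly the details the paper leaves to the reader.
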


The following proposition gives the asymptotic behavior of $H_{\ell}$ and its proof is deferred to Section \ref{sec4}.

\begin{pro}\label{pro-3.5}
Let $H_\ell$ for $\ell = 0, 1, 2, \ldots$ be as defined in Lemma \ref{lem-3.4}. Then,
\begin{equation*}
H_\ell(s) \stackrel{s \uparrow 1}{\sim} K_{\ell}(1-s)^{-(\ell+3)/2} ,
\end{equation*}
where
\begin{equation}\label{3.15}
K_{\ell}=\frac{\ell+1}{2^{(\ell+1)/2}}2^{c}\int_{0}^{\infty}x^{\ell}\left(\frac{e^{x}}{e^{2x}+1}\right)^{c}dx.
\end{equation}
\end{pro}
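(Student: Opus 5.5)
We need the asymptotics of $H_\ell(s)=\frac{\ell+1}{1-s}\sum_k k\cdots(k+\ell-1)\, s\prod_{i<k}G_i(s)$ as $s\uparrow1$. Write $s$ through $r_s=e^{\theta}$, where $\theta=\theta(s)=\log r_s$. From \eqref{3.1}, $\cosh\theta=1/s$, so $\theta\sim\sqrt{2(1-s)}$ as $s\uparrow 1$. The plan is a Riemann-sum argument on the scale $k\approx x/\theta$. The product $\prod_{i<k}G_i(s)$ converges to a function of $x=k\theta$, and the polynomial weight $k\cdots(k+\ell-1)\approx k^\ell$ contributes $\theta^{-\ell}$. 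The sum over $k$ then becomes $\theta^{-\ell-1}\int\cdots dx$.

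First we get explicit forms. With $r=e^\theta$ we have $g_x(s)=\cosh((x-1)\theta)/\cosh(x\theta)$, and $\frac1{G_x}=\frac{1+c}{s}-c\,g_x$. We expand $1/G_x-1$ for small $\theta$, uniformly in $x$. Using $1/s=\cosh\theta=1+\theta^2/2+O(\theta^4)$ and
\[
g_x=\cosh\theta-\sinh\theta\tanh(x\theta)=1-\theta\tanh(x\theta)+O(\theta^2),
\]
we obtain
\[
\frac1{G_x(s)}=1+c\,\theta\tanh(x\theta)+O(\theta^2),
\]
with the $O(\theta^2)$ uniform in $x\ge1$. Hence
\[
\log\prod_{i=1}^{k-1}G_i(s)=-c\sum_{i<k}\theta\tanh(i\theta)+O(k\theta^2).
\]
The Riemann sum gives $\sum_{i<k}\theta\tanh(i\theta)=\log\cosh(k\theta)+O(\theta)$, since $\tanh$ is monotone and bounded. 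So for $k=x/\theta$ with $x$ in a compact set,
\[
\prod_{i<k}G_i(s)\to(\cosh x)^{-c}=2^c\Big(\frac{e^x}{e^{2x}+1}\Big)^c .
\]
This is exactly the integrand in \eqref{3.15}.

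Second, we need domination so that dominated convergence applies to $\theta^{\ell+1}\sum_k k^\ell\prod_{i<k}G_i$. The error $O(k\theta^2)$ is harmless only for $k\theta$ bounded, so for large $x$ we need a separate bound. Here we use the exact inequality $g_x\le 1-\theta'\tanh(x\theta)$, or more simply that $G_i(s)\le 1/(1+c\,a\tanh(i\theta)\theta)$ for a constant $a>0$ and small $\theta$. This gives
\[
\prod_{i<k}G_i\le C e^{-c'k\theta}
\]
for $k\theta\ge1$, and $\prod_{i<k}G_i\le1$ always, since $G_i\le1$: indeed $sg_i\le1$ implies $1+c-csg_i\ge1\ge s$. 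This exponential domination, obtained from a careful lower bound on $\cosh\theta-s\,g_x\cdot$(stuff) uniformly in $x$, is the main technical step. I would first try proving
\[
1-s\,g_x(s)\ge \tfrac12\,\theta\tanh(x\theta)
\]
directly from the hyperbolic identity
\[
1-sg_x=s\big(\cosh\theta-g_x\big)=s\sinh\theta\tanh(x\theta).
\]
This identity is exact, because $\cosh((x-1)\theta)=\cosh x\theta\cosh\theta-\sinh x\theta\sinh\theta$. So in fact
\[
\frac1{G_x}=1+\frac{c\,s\sinh\theta\tanh(x\theta)+(1+c)(1-s)}{s},
\]
which makes both the limit and the domination transparent. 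The contribution from $k\theta<\epsilon$ is $O(\epsilon^{\ell+1})\theta^{-\ell-1}$.

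Finally, we assemble the pieces. We have
\[
\sum_k k\cdots(k+\ell-1)\, s\prod_{i<k}G_i\sim\theta^{-\ell-1}\int_0^\infty x^{\ell}(\cosh x)^{-c}\,dx .
\]
Inserting $\theta\sim(2(1-s))^{1/2}$ and the prefactor $(\ell+1)/(1-s)$ gives
\[
H_\ell(s)\sim(\ell+1)\,2^{-(\ell+1)/2}(1-s)^{-(\ell+3)/2}\int_0^\infty x^\ell(\cosh x)^{-c}\,dx ,
\]
and since $(\cosh x)^{-c}=2^c\big(e^x/(e^{2x}+1)\big)^c$, this is $K_\ell(1-s)^{-(\ell+3)/2}$.
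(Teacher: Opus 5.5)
Your proposal is correct and follows essentially the same route as the paper. Your exact formula $1/G_x(s)=\cosh\theta+c\sinh\theta\tanh(x\theta)$ is the paper's \eqref{4.7}, since $\sqrt{1-s^2}=s\sinh d_s$ and $\phi(2xd_s)=\tanh(xd_s)$; both arguments then read $\theta^{\ell+1}\sum_k k^{\ell}\prod_{i<k}G_i$ as a Riemann sum for $\int_0^\infty x^\ell(\cosh x)^{-c}\,dx$, and your exponential domination for large $k\theta$ supplies the tail control that the paper leaves implicit. One harmless slip: the exact identity is $1/G_x-1=\big((1-s)+cs\sinh\theta\tanh(x\theta)\big)/s$, not $(1+c)(1-s)$ in the numerator, but this term is nonnegative and $O(\theta^2)$, so it affects neither the limit nor the domination.
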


\smallskip

\begin{proof}[Proof of Theorem \ref{thm-1.1}]

Let $a_n := \mathbb{E}[R_n \cdots (R_n + \ell)]$. It follows from Proposition \ref{pro-3.5} that
\begin{equation*}
H_\ell(s) = \sum_{n=1}^\infty a_n s^n \stackrel{s \uparrow 1}{\sim} \frac{K_\ell}{(1-s)^{(\ell+3)/2}},
\end{equation*}
Thus, Theorem \ref{thm-3.1} holds with $A=K_{\ell}$ and $\alpha = (\ell+3)/2$. In particular, \eqref{3.2-a} implies that
\begin{equation*}
\mathbb{E}[R_n \cdots (R_n + \ell)]\sim
\frac{K_{\ell}\cdot (\ell +3)/2}{\Gamma((\ell+5)/2)}n^{(\ell+1)/2}.
\end{equation*}
Combining \eqref{3.15} with the identity  $\Gamma(x+1)=x\Gamma(x)$, the coefficient
can be simplified  to
\[
\frac{K_{\ell}\cdot (\ell +3)/2}{\Gamma((\ell+5)/2)}
=\frac{2^{c}}{2^{(\ell-1)/2}\Gamma((\ell+1)/2)} \int_{0}^{\infty}x^{\ell }\left( \frac{e^{x}}{e^{2x}+1} \right)^{c}dx.
\]
Consequently,  for  every integer  $\ell=0, 1, 2,\ldots$,
\[
\mathbb{E}[R_{n}^{\ell+1}] \sim
\frac{2^{c}}{2^{(\ell-1)/2}\Gamma((\ell+1)/2)} \int_{0}^{\infty}x^{\ell }\left( \frac{e^{x}}{e^{2x}+1} \right)^{c}dx  \cdot n^{(\ell+1)/2}.
\]
The proof is complete.
\end{proof}

\section{Proof of Proposition \ref{pro-3.5}}   \label{sec4}

The proof is similar to that of \cite[Proposition 3.4]{PS}. Recall $r_s$ is defined in \eqref{3.1}.
Let $d_{s}=\log r_s$.
We can then rewrite  \eqref{3.2} as
\begin{align}\label{4.2}
g_{x}(s)=
\frac{e^{(x-1)\log r_s}+e^{(1-x)\log r_s}}{e^{x\log r_s}+e^{-x\log r_s}}
=\frac{\cosh(x d_{s} -d_{s})}{\cosh(xd_{s})}
=\cosh d_{s}-\tanh(xd_{s})\sinh d_{s},
\end{align}
where the last equality follows from the identity $\cosh (u+v)=\cosh u \cosh v+\sinh u \sinh v$.
By the definition of $d_s$,  we have
\begin{equation*}\label{4.3}
\cosh d_{s}=\frac{r_s+r^{-1}_s}2=\frac 1{s}.
\end{equation*}
Using the identity $\cosh^{2}(d_{s})-\sinh^{2}(d_{s})=1$,   we get
\begin{equation*}\label{4.5}
\sinh d_{s}=\sqrt{\cosh^{2} d_{s}-1}=\sqrt{1-s^{2}}/s.
\end{equation*}
 Define
\begin{equation*}\label{4.1}
 \phi (x) :=\frac{e^{x}-1}{e^{x}+1},
 \end{equation*}
then we have
\begin{equation*}\label{4.4}
\tanh(xd_{s})=\frac{e^{xd_{s}}-e^{-xd_{s}}}{e^{xd_{s}}+e^{-xd_{s}}}=\frac{e^{2xd_{s}}-1}{e^{2xd_{s}}+1}=\phi(2xd_{s}).
\end{equation*}
Substituting the above equalities into \eqref{4.2} yields
\begin{equation}\label{4.6}
g_{x}(s)=s^{-1}\big(1-\phi(2x d_{s})\sqrt{1-s^{2}}\big).
\end{equation}
Now inserting \eqref{4.6} into \eqref{3.2} gives
\begin{equation}\label{4.7}
G_{x}(s)=\frac{s}{1+c\phi(2xd_{s})\sqrt{1-s^{2}}}.
\end{equation}

Set $ t := t(s) := 1 - s $. By the definition of $d_{1-t}$, we have
\begin{align*}
f_{t} := d_{1-t}
&= \log\left(\frac{1}{1-t}\left(1 + \sqrt{t(2-t)}\right)\right) \nonumber \\
&= -\log(1-t) + \log\left(1 + \sqrt{t(2-t)}\right) .
\end{align*}
 Applying  the  second-order Taylor expansion of $\log(1+x)$ around 0, we obtain
\begin{align}\label{4.8}
f_{t}
&= t + \sqrt{t(2-t)} - \frac{t}{2}(2-t) + O(t^{3/2}) \nonumber \\
&=\sqrt{2t}+O(t^{3/2}) \quad \text{as} \quad t \to 0.
\end{align}
 We now rewrite \eqref{4.6} and \eqref{4.7} as functions of $t$. By \eqref{4.8}, we have
\begin{align*}
g_{x}(1-t)&=\frac{1}{1-t}\left(1-\phi(2xf_{t})\sqrt{t(2-t)}\right)\\
&=(1+O(t))\left(1-\phi(2\sqrt{2t}x)\sqrt{2t}(1+O(t))\right)\\
&=1-\phi(2\sqrt{2t}x)\sqrt{2t}+O(t),
\end{align*}
and, replacing $\frac 1{1+x}$ by its first-order Taylor expansion around 0, we get
\begin{align*}\label{4.9}
G_{x}(1-t)
&=\frac{1-t}{1+c\phi(2xf_{t})\sqrt{ t(2-t)} }\nonumber \\
&=(1-t)\left(1-c\phi(2xf_{t})\sqrt{2t}(1+O(t))+O(t)\right) \nonumber \\
&=1-c\phi(2xf_{t})\sqrt{2t}+O(t)\nonumber\\
&=1-c\phi(2\sqrt{2t}x)\sqrt{2t}+O(t).
\end{align*}
Consequently,
\begin{equation*}\label{4.12} \begin{aligned}
\prod_{i=1}^{k-1}G_{i}(1-t)& =\exp\left\{\sum_{i=1}^{k-1}\log G_{i}(1-t)\right\}  \\
&=\exp\left\{\sum_{i=1}^{k-1}\log\left(1-c\phi(2\sqrt{2t}i)\sqrt{2t}+O(t)\right) \right\}   \\
&=\exp\left\{\sum_{i=1}^{k-1}-c\phi(2\sqrt{2t}i)\sqrt{2t}+O(t) \right\}.
\end{aligned}\end{equation*}
We
now deal with the series in \eqref{3.15-a}:
\begin{align}\label{4.10}
H_{\ell}(1-t)t^{\frac{\ell+3}{2}}
&= \frac{\ell+1}{2^{(\ell+1)/2}} \sum_{k=1}^{\infty}\sqrt{2t} \left(\prod_{j=0}^{\ell-1}\sqrt{2t}(k+j)\right)(1-t)  \exp\left(\sum_{i=1}^{k-1}-c\phi(2\sqrt{2t}i)\sqrt{2t}+O(t)\right)\nonumber\\
&=\frac{\ell+1}{2^{(\ell+1)/2}}\int_{0}^{\infty}x^{\ell}  \exp\left( -c \int_{0}^{x}\phi(2y)dy\right)dx\cdot(1+o(t)),
\end{align}
where we regarded the two sums as the Riemann sums of the integrals in the last line above (with $\sqrt{2t}\approx dx, dy$).
Using a simple change of variables, we get
\begin{align*}
\int_{0}^{x}\phi(2y)dy&=\int_{0}^{x}\frac{e^{2y}-1}{e^{2y}+1}dy=\int_{1}^{e^{2x}}\frac{z-1}{z+1}\cdot \frac{1}{2z}dz=\frac{1}{2}\int_{1}^{e^{2x}}\left( \frac{2}{z+1}-\frac{1}{z}\right)dz\\
&=\log(z+1)|_{1}^{e^{2x}}-\frac{1}{2}\log z |_{1}^{e^{2x}}=\log(e^{2x}+1)-\log2-x.
\end{align*}
Hence,
\begin{align*}
\exp\left(-c \int_{0}^{x}\phi(2y)dy\right)=2^{c}\left(\frac{e^{x}}{e^{2x}+1}\right)^{c}.
\end{align*}
Substituting this expression into  \eqref{4.10}  completes the proof.
\hfill\fbox

\smallskip

\noindent {\bf\large Acknowledgments}\  This work was supported by the National Natural Science Foundation of China (Grant Nos. 12171335, 12471139) and the Simons Foundation (\#960480).

\end{document}